\begin{document}
\title[Extension of Mittag-Leffler function]
{Extension of Mittag-Leffler function }
\author[G. Rahman, K. S. Nisar, S. Mubeen, M. Arshad]
{G. Rahman, K. S. Nisar, S. Mubeen, M. Arshad}  % in alphabetical order
\address{Department of Mathematics University of Sargodha, Sargodha, Pakistan}
\address{Gauhar Rahman \newline
 Department of Mathematics, International Islamic University,
Islamabad, Pakistan}
\email{gauhar55uom@gmail.com}
\address{Kottakkaran Sooppy Nisar \newline Department of Mathematics, College of Arts and
Science, Prince Sattam bin Abdulaziz University, Wadi Al dawaser, Riyadh
region 11991, Saudi Arabia}
\email{ksnisar1@gmail.com}

\address{Shahid Mubeen \newline
Department of Mathematics, University of Sargodha, Sargodha, Pakistan}
\email{smjhanda@gmail.com}
\address{ Muhammad Arshad \newline
Department of Mathematics, International Islamic University,
Islamabad, Pakistan}
\email{marshad$_{-}$zia@yahoo.com}

\subjclass[2000] {33B20, 33C20, 33C45, 33C60, 33B15, 33C05, 26A33}
\keywords{Mittag-Leffler function, extended beta function, fractional derivative, Mellin transform}

\begin{abstract}
In this paper, we present extension of Mittag-Leffler function by using extension of beta functions (\"{O}zergin et al. in J. Comput. Appl. Math. 235 (2011), 4601-4610) and obtain some integral representation of this newly defined function. Also we present the Mellin transform of this function in terms of Wright hypergeometric function. Furthermore, we show that the extended fractional derivative of the usual Mittag-Leffler function gives the extension of Mittag-Leffler function.
 \end{abstract}

\maketitle

\numberwithin{equation}{section}
\newtheorem{theorem}{Theorem}[section]
\newtheorem{lemma}[theorem]{Lemma}
\newtheorem{proposition}[theorem]{Proposition}
\newtheorem{corollary}[theorem]{Corollary}
\newtheorem*{remark}{Remark}
\newtheorem{definition}{Definition}
\vskip 6mm
\section{\bf\large  Introduction and Preliminaries }
\vskip 6mm
The Mittag-Leffler function occurs naturally in the solution of fractional order and integral equation. The importance of such functions in physics and engineering is steadily increasing. Some application of the Mittag-Leffler is carried out in the Study of Kinetic Equation, Study of Lorenz System, Random Walk, Levy Flights and Complex System and also in applied problems such as fluid flow, electric network, probability and statistical distribution theory. \\
We begin with the Gosta and Wiman Mittag-Leffler functions $E_\rho(z)$ and $E_{\rho,\sigma}(z)$ are defined by the following series as:
\begin{eqnarray}\label{a1}
E_\rho(z)=\sum\limits_{n=0}^{\infty}\frac{z^n}{\Gamma(\rho n+1)}, z\in\mathbb{C}; \mathfrak{R}(\rho)>0
\end{eqnarray}
and
\begin{eqnarray}\label{a2}
E_{\rho,\sigma}(z)=\sum\limits_{n=0}^{\infty}\frac{z^n}{\Gamma(\rho n+\sigma)}, z, \sigma\in\mathbb{C}; \mathfrak{R}(\rho)>0,
\end{eqnarray}
respectively.  For further study of $E_\rho(z)$ and $E_{\rho,\sigma}(z)$ such as generalizations and applications , the readers may refer to the recent work of researchers \cite{R1,R11,R4,R5,R7,R13} and the work of Saigo and Kilbas \cite{R17}. In recent years, the  function defined in (\ref{a1}) and some of generalizations have been numerically considered in the complex plane (see \cite{R10,R19}). Prabhakar \cite{R16} have introduced a generalization of the  function $E_{\rho, \sigma}(z)$ defined in (\ref{a2}) as follows:
\begin{eqnarray}\label{a3}
E_{\rho,\sigma}^{\gamma}(z)=\sum\limits_{n=0}^{\infty}\frac{(\delta)_n}{\Gamma(\rho n+\sigma)}\frac{z^n}{n!}, z, \sigma\in\mathbb{C}; \mathfrak{R}(\rho)>0,
\end{eqnarray}
where $(\delta)_n$ denote the well known Pochhammer Symbol which is defined by:
\begin{eqnarray*}
(\delta)_n=\left\{\begin{array}{l}\label{1}
1, (n=0, \delta\in\mathbb{C})\\
               \delta(\delta+1)\cdots(\delta+n-1), (n\in\mathbb{N}, \delta\in\mathbb{C}).\\
\end{array}\right.
\end{eqnarray*}
Obviously, the following special cases are satisfied:
\begin{eqnarray}
E_{\rho,\sigma}^{1}(z)=E_{\rho, \sigma}(z)=E_{\rho, 1}^{1}(z)=E_{\rho}(z).
\end{eqnarray}
In recent times many researchers have investigated the importance and great consideration of Mittag-Leffler function in the theory of special functions for exploring the generalization and some applications. Many extensions for these functions are found in (\cite{R6}, \cite{R21}-\cite{R24}). Shukla and Prajapati \cite{CP} (see also \cite{SZ}) defined and investigated the function  $E_{\rho, \sigma}^{\delta,q}(z)$, which is defined as:
\begin{eqnarray}\label{a4}
E_{\rho, \sigma}^{\delta,q}(z)=\sum\limits_{n=0}^{\infty}\frac{(\delta)_{nq}}{\Gamma(\rho n+\sigma)}\frac{z^n}{n!},
\end{eqnarray}
where $z,\sigma,\delta\in\mathbb{C}$; $\mathfrak{R}(\rho)>0$; $q>0$.
In the same paper, they  have used the well-known  Riemann-Liouville right-sided fractional integral, derivative and generalized Riemann-Liouville derivative operators (see \cite{R8,R9,R13,R18}). Very recently \"{O}zarslan and Yilmaz \cite{OY} have investigated an extended Mittag-Leffler function $E_{\rho,\sigma}^{\delta;c}(z;p)$, which is defined as:
\begin{eqnarray}\label{a5}
E_{\rho,\sigma}^{\delta;c}(z;p)=\sum\limits_{n=0}^{\infty}\frac{B_p(\delta+n, c-\delta)}{B(\delta, c-\delta)}\frac{(c)_n}{\Gamma(\rho n+\sigma)}\frac{z^n}{n!},
\end{eqnarray}
where $p\geq0$, $\mathfrak{Re}(c)>\mathfrak{Re}(\delta)>0$ and $B_p(x,y)$ is extended beta function defined in \cite{CQ,FM} as follows:
\begin{eqnarray}\label{dd}
B_p(x,y)=\int\limits_{0}^{1}t^{x-1}(1-t)^{y-1}e^{-\frac{p}{t(1-t)}}dt,
\end{eqnarray}

where $\mathfrak{Re}(p)>0$, $\mathfrak{Re}(x)>0$ and $\mathfrak{Re}(y)>0$.\\
In this paper, we define further extension of Mittag-Leffler function as:
\begin{eqnarray}\label{Fextended}
E_{\alpha, \beta}^{\gamma, c;\lambda, \rho}(z;p)=\sum\limits_{n=0}^{\infty}\frac{B_p^{\lambda,\rho}(\gamma+n, c-\gamma)}{B(\gamma,c-\gamma)}\frac{(c)_n}{\Gamma(\alpha n+\beta)}\frac{z^n}{n!}
\end{eqnarray}
where $p\geq0$, $\Re(c)>\Re(\gamma)>0$ and $B_p^{\lambda,\rho}$ is extension of extended beta function defined by
\begin{eqnarray}\label{Ebeta}
B_p^{\lambda,\rho}(x,y)=\int_0^1t^{x-1}(1-t)^{y-1}._1F_1\Big[\lambda;\rho;-\frac{p}{t(1-t)}\Big]dt,
\end{eqnarray}
where $\Re(p)>0$, $\Re(x)>0$, $\Re(y)>0$, $\Re(\lambda)>0$, $\Re(\rho)>0$. It is obvious that $B_p^{\lambda,\lambda}(x,y)=B_p(x,y)$ and $B_0(x,y)=B(x,y)$. For further details the readers are refer to the work of \"{O}zergin et al. \cite{Ozergin}.
\begin{remark}
(1) If setting $\rho=\lambda$ in (\ref{Fextended}), then it reduces to the well known extended Mittag-Leffler function as defined in (\ref{a5}).\\
(ii) If setting $\rho=\lambda$ and $p=0$ in (\ref{Fextended}), then it reduces to the well-known Mittag-Leffler function as defined in (\ref{a3}).
\end{remark}
%%%%%%%%%%%%%%%%%%%%%%%%%%%%%%%%%%%%%%%%%%%%%%%%%%%%%%%%%%%%%%%%%%%%%%%%%%%%%%%%%%%
\section{Properties of further extended Mittag-Leffler function}
We start with the following theorem, which gives the integral representation of the extended Mittag-Leffler function.
\begin{theorem}\label{tha}
Let $c, \alpha, \beta, \gamma, \lambda, \rho\in\mathbb{C}$, $\Re(c)>\Re(\gamma)>0$, $\Re(\alpha)>0$, $\Re(\beta)>0$, $\Re(\lambda)>0$, $\Re(\rho)>0$. Then for the extended Mittag-Leffler function, we have the following integral representation
\begin{eqnarray}\label{integral}
E_{\alpha,\beta}^{\gamma,c;\lambda,\rho}(z;p)=\frac{1}{B(\gamma,c-\gamma)}\int_0^1t^{\gamma-1}(1-t)^{c-\gamma-1}
._1F_1\Big[\lambda;\rho;-\frac{p}{t(1-t)}\Big]E_{\alpha,\beta}^{c}(tz)dt.
\end{eqnarray}
\end{theorem}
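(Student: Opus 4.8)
The plan is to proceed directly from the series definition (\ref{Fextended}), replace each extended beta coefficient by its defining integral (\ref{Ebeta}), and then interchange summation and integration so that the series collapses into the Prabhakar function $E_{\alpha,\beta}^{c}$. First I would write
\begin{equation*}
E_{\alpha, \beta}^{\gamma, c;\lambda, \rho}(z;p)=\frac{1}{B(\gamma,c-\gamma)}\sum_{n=0}^{\infty}B_p^{\lambda,\rho}(\gamma+n, c-\gamma)\,\frac{(c)_n}{\Gamma(\alpha n+\beta)}\frac{z^n}{n!},
\end{equation*}
and substitute
\begin{equation*}
B_p^{\lambda,\rho}(\gamma+n,c-\gamma)=\int_0^1 t^{\gamma+n-1}(1-t)^{c-\gamma-1}\,{}_1F_1\Big[\lambda;\rho;-\tfrac{p}{t(1-t)}\Big]\,dt,
\end{equation*}
so that the only $n$-dependence inside the integrand becomes the factor $t^{n}$.

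The central step is to interchange the order of summation and integration, and I expect this to be the one point demanding justification. To secure it I would estimate termwise: since $t^{n}\le 1$ on $[0,1]$ we have $t^{\Re(\gamma)+n-1}\le t^{\Re(\gamma)-1}$, so each integral is bounded by $\int_0^1 t^{\Re(\gamma)-1}(1-t)^{\Re(c-\gamma)-1}\big|{}_1F_1[\lambda;\rho;-\tfrac{p}{t(1-t)}]\big|\,dt$, a finite constant independent of $n$ under the hypotheses $\Re(\gamma)>0$ and $\Re(c-\gamma)>0$ (the Kummer factor is bounded on $(0,1)$, in fact decaying at the endpoints when $p>0$). With this uniform bound extracted, the remaining numerical series is a constant multiple of $\sum_{n\ge 0}\frac{|(c)_n|}{|\Gamma(\alpha n+\beta)|}\frac{|z|^n}{n!}$, which converges for all $z$ because the Prabhakar function is entire when $\Re(\alpha)>0$. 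Hence Fubini's theorem (equivalently, dominated convergence) licenses the exchange.

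Once the interchange is performed, I would pull the factors $t^{\gamma-1}(1-t)^{c-\gamma-1}\,{}_1F_1[\lambda;\rho;-\tfrac{p}{t(1-t)}]$ outside the sum, leaving inside the integral the series $\sum_{n\ge 0}\frac{(c)_n}{\Gamma(\alpha n+\beta)}\frac{(tz)^n}{n!}$, which is exactly $E_{\alpha,\beta}^{c}(tz)$ by the Prabhakar definition (\ref{a3}) with parameter $c$. This delivers precisely the representation (\ref{integral}). After the interchange no further difficulty remains; the only manipulations needed are the elementary splitting $t^{\gamma+n-1}=t^{\gamma-1}t^{n}$ and the recognition of the standard Mittag-Leffler series.
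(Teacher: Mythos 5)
Your proposal follows essentially the same route as the paper's proof: substitute the integral (\ref{Ebeta}) into the series (\ref{Fextended}), interchange summation and integration, and identify the resulting inner series with $E_{\alpha,\beta}^{c}(tz)$ via (\ref{a3}). The only difference is that you supply a dominated-convergence justification for the interchange, which the paper omits; your argument is correct.
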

\begin{proof}
Using equation (\ref{Ebeta}) in equation (\ref{Fextended}), we have
\begin{align*}
E_{\alpha,\beta}^{\gamma,c;\lambda,\rho}(z;p)&=\sum\limits_{n=0}^{\infty}\Big\{\int_0^1t^{\gamma+n-1}(1-t)^{c-\gamma-1}\\
&\times{}_1F_1\Big[\lambda;\rho;-\frac{p}{t(1-t)}\Big]dt\Big\}\frac{(c)_n}{B(\gamma,c-\gamma)}\frac{z^n}{\Gamma(\alpha n+\beta)n!}\Big\}.
\end{align*}
Interchanging the order of summation and integration in above equation, we get
\begin{align*}
E_{\alpha,\beta}^{\gamma,c;\lambda,\rho}(z;p)&=\int_0^1t^{\gamma-1}(1-t)^{c-\gamma-1}\\
&\times{}_1F_1\Big[\lambda;\rho;-\frac{p}{t(1-t)}\Big]\sum\limits_{n=0}^{\infty}
\frac{(c)_n}{B(\gamma,c-\gamma)}\frac{(tz)^n}{\Gamma(\alpha n+\beta)n!}dt.
\end{align*}
Using equation (\ref{a3}) in above equation, we get the desired integral representation.
\end{proof}
\begin{corollary}
Substituting $t=\frac{u}{1+u}$ in Theorem \ref{tha}, we get
\begin{eqnarray}
E_{\alpha,\beta}^{\gamma,c;\lambda,\rho}(z;p)=\frac{1}{B(\gamma,c-\gamma)}\int_0^\infty\frac{u^{\gamma-1}}{(u+1)^c}._1F_1
\Big[\lambda;\rho;-\frac{p(1+u)^2}{u}\Big]E_{\alpha,\beta}^{c}\Big(\frac{uz}{1+u}\Big)du.
\end{eqnarray}
\end{corollary}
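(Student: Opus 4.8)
The plan is to start from the integral representation furnished by Theorem \ref{tha} and to carry out the single change of variables $t=\dfrac{u}{1+u}$ prescribed in the statement. This map sends $t=0$ to $u=0$ and $t=1$ to $u=\infty$, so it is a smooth increasing bijection of $(0,1)$ onto $(0,\infty)$; consequently the limits of integration transform from $[0,1]$ to $[0,\infty)$, which already accounts for the new domain appearing on the right-hand side.

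Next I would assemble the elementary pieces of the substitution. Differentiating gives $dt=\dfrac{1}{(1+u)^2}\,du$, while $1-t=\dfrac{1}{1+u}$ and hence $t(1-t)=\dfrac{u}{(1+u)^2}$. The last identity converts the argument of the confluent hypergeometric factor into $-\dfrac{p}{t(1-t)}=-\dfrac{p(1+u)^2}{u}$, matching the ${}_1F_1$ appearing in the corollary, and it simultaneously turns $tz$ into $\dfrac{uz}{1+u}$, which is exactly the argument of $E_{\alpha,\beta}^{c}$ on the right.

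The only step requiring care is the bookkeeping of the powers of $1+u$. Writing $t^{\gamma-1}=u^{\gamma-1}(1+u)^{-(\gamma-1)}$ and $(1-t)^{c-\gamma-1}=(1+u)^{-(c-\gamma-1)}$, and then multiplying by the Jacobian factor $(1+u)^{-2}$ coming from $dt$, one collects the exponent $-(\gamma-1)-(c-\gamma-1)-2=-c$. Thus the three factors combine into $\dfrac{u^{\gamma-1}}{(1+u)^{c}}\,du$, leaving the prefactor $\dfrac{1}{B(\gamma,c-\gamma)}$ untouched. Substituting these evaluations into the integral of Theorem \ref{tha} yields precisely the asserted formula, completing the proof. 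I expect no genuine obstacle here beyond the careful cancellation of exponents just described; the interchange of summation and integration was already justified in establishing Theorem \ref{tha}, so it need not be revisited.
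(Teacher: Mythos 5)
Your proposal is correct and follows exactly the route the paper intends: the corollary is obtained by the substitution $t=\frac{u}{1+u}$ in Theorem \ref{tha}, and your bookkeeping of the Jacobian and the exponent $-(\gamma-1)-(c-\gamma-1)-2=-c$ is accurate. The paper merely asserts the substitution without writing out these details, so your argument simply supplies the omitted computation.
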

\begin{corollary}
Taking $t=\sin^2\theta$ in Theorem \ref{tha}, we get the following integral representation
\begin{align}
E_{\alpha,\beta}^{\gamma,c;\lambda,\rho}(z;p)&=\frac{1}{B(\gamma,c-\gamma)}\Big[2\int_0^\infty\sin^{2\gamma-1}\theta\cos^{2c-1}\theta._1F_1
\Big[\lambda;\rho;-\frac{p}{\sin^2\theta\cos^2\theta}\Big]\Big]\notag\\
&\times E_{\alpha,\beta}^{c}\Big(z\sin^2\theta\Big)d\theta.
\end{align}
\end{corollary}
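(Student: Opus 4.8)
The plan is to obtain this representation directly from Theorem \ref{tha} by the trigonometric change of variables $t=\sin^2\theta$, the standard device for converting a beta-type integral over $[0,1]$ into a trigonometric integral. First I would record the basic substitution data applied to
\[
E_{\alpha,\beta}^{\gamma,c;\lambda,\rho}(z;p)=\frac{1}{B(\gamma,c-\gamma)}\int_0^1 t^{\gamma-1}(1-t)^{c-\gamma-1}{}_1F_1\Big[\lambda;\rho;-\frac{p}{t(1-t)}\Big]E_{\alpha,\beta}^{c}(tz)\,dt.
\]
Under $t=\sin^2\theta$ one has $1-t=\cos^2\theta$, so that $t(1-t)=\sin^2\theta\cos^2\theta$, while the differential transforms as $dt=2\sin\theta\cos\theta\,d\theta$; the endpoints $t=0$ and $t=1$ correspond to $\theta=0$ and $\theta=\pi/2$.

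Next I would feed these into the integrand factor by factor. The power terms become $t^{\gamma-1}=\sin^{2\gamma-2}\theta$ and $(1-t)^{c-\gamma-1}=\cos^{2c-2\gamma-2}\theta$, the confluent hypergeometric factor becomes ${}_1F_1\big[\lambda;\rho;-p/(\sin^2\theta\cos^2\theta)\big]$, and the Prabhakar–Mittag-Leffler factor becomes $E_{\alpha,\beta}^{c}(z\sin^2\theta)$. Absorbing the Jacobian $2\sin\theta\cos\theta$ into the two power factors raises the sine exponent by one to $2\gamma-1$ and the cosine exponent by one to $2c-2\gamma-1$, while the constant $2$ is pulled outside the integral. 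This assembles precisely the stated trigonometric integral.

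Since every manipulation is a direct change of variables, there is no genuine analytic obstacle here; the proof is essentially bookkeeping. The only points requiring care are tracking the exponents once the Jacobian $2\sin\theta\cos\theta$ is merged with the power factors, and observing that the substitution does not disturb the termwise interchange of integration and summation that was already justified in the proof of Theorem \ref{tha}, so that the identity may equally be read off term by term before resumming. I would also note, for correctness, that the resulting upper limit is $\theta=\pi/2$ and that the cosine exponent produced by the computation is $2c-2\gamma-1$.
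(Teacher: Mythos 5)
Your derivation is correct and is exactly the intended route: the paper supplies no separate argument for this corollary beyond the substitution $t=\sin^2\theta$ in Theorem \ref{tha}, which is what you carry out. Moreover, your bookkeeping is more careful than the printed statement: the substitution genuinely produces the upper limit $\theta=\pi/2$ (not $\infty$) and the cosine exponent $2c-2\gamma-1$ (not $2c-1$), consistent with the trigonometric form $B(\gamma,c-\gamma)=2\int_0^{\pi/2}\sin^{2\gamma-1}\theta\,\cos^{2c-2\gamma-1}\theta\,d\theta$ of the beta integral, so the two discrepancies you flag are typographical errors in the corollary as stated rather than defects in your proof.
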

  Kurulay and Bayram \cite{Kurulay} introduced the following recurrence relation  for Prabhakar Mittag-Leffler function as:
$$E_{\alpha,\beta}^{c}(tz)=\beta E_{\alpha,\beta+1}^{c}(tz)+\alpha z\frac{d}{dz} E_{\alpha,\beta+1}^{c}(tz).$$
Inserting the above recurrence relation into (\ref{integral}), we get the following recurrence relation for the newly defined extended Mittag-Leffler function.
\begin{corollary}
Let $p\geq0$, $\Re(c)>\Re(\gamma)>0$, $\Re(\alpha)>0$, $\Re(\beta)>0$, $\Re(\lambda)>0$, $\Re(\rho)>0$, then the following relation holds:
\begin{eqnarray}
E_{\alpha,\beta}^{\gamma,c;\lambda,\rho}(z;p)=\beta E_{\alpha,\beta+1}^{\gamma,c;\lambda,\rho}(z;p)+\alpha z\frac{d}{dz}E_{\alpha,\beta+1}^{\gamma,c;\lambda,\rho}(z;p)
\end{eqnarray}
\end{corollary}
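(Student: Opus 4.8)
The plan is to take the integral representation of Theorem \ref{tha} as the starting point, since the statement is phrased precisely so as to exploit the Prabhakar recurrence $E_{\alpha,\beta}^{c}(tz)=\beta E_{\alpha,\beta+1}^{c}(tz)+\alpha z\frac{d}{dz}E_{\alpha,\beta+1}^{c}(tz)$ recalled immediately above the corollary. First I would write
\begin{equation*}
E_{\alpha,\beta}^{\gamma,c;\lambda,\rho}(z;p)=\frac{1}{B(\gamma,c-\gamma)}\int_0^1 t^{\gamma-1}(1-t)^{c-\gamma-1}\,{}_1F_1\Big[\lambda;\rho;-\frac{p}{t(1-t)}\Big]E_{\alpha,\beta}^{c}(tz)\,dt,
\end{equation*}
substitute the recurrence relation for $E_{\alpha,\beta}^{c}(tz)$ into the integrand, and then split the resulting expression into two integrals by linearity of the integral.

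The first integral reproduces, verbatim, the integral representation of Theorem \ref{tha} but with the second lower parameter increased by one, so by that theorem it equals $\beta E_{\alpha,\beta+1}^{\gamma,c;\lambda,\rho}(z;p)$; the constant $\beta$ and the normalizing factor $1/B(\gamma,c-\gamma)$ carry through unchanged. For the second integral I would pull the factor $\alpha z$ outside the integral (it does not depend on $t$) and then interchange the operator $\frac{d}{dz}$ with the $t$-integration, so that
\begin{equation*}
\frac{\alpha z}{B(\gamma,c-\gamma)}\int_0^1 t^{\gamma-1}(1-t)^{c-\gamma-1}\,{}_1F_1\Big[\lambda;\rho;-\frac{p}{t(1-t)}\Big]\frac{d}{dz}E_{\alpha,\beta+1}^{c}(tz)\,dt=\alpha z\,\frac{d}{dz}E_{\alpha,\beta+1}^{\gamma,c;\lambda,\rho}(z;p),
\end{equation*}
once more by appealing to Theorem \ref{tha} with $\beta$ replaced by $\beta+1$. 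Adding the two contributions yields the claimed recurrence.

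The main obstacle is the justification of differentiation under the integral sign in the second step. This requires that the $z$-derivative of the integrand be dominated, locally uniformly in $z$, by a function of $t$ that is integrable on $(0,1)$; here the weight $t^{\gamma-1}(1-t)^{c-\gamma-1}$ is integrable under $\Re(c)>\Re(\gamma)>0$, the confluent factor ${}_1F_1[\lambda;\rho;-p/(t(1-t))]$ is bounded on $(0,1)$ (for $p>0$ it in fact decays to $0$ at both endpoints, by the $x\to-\infty$ asymptotics of ${}_1F_1$ with $\Re(\lambda)>0$), and the entire function $E_{\alpha,\beta+1}^{c}(tz)$ together with its derivative is bounded on compact $z$-sets uniformly in $t\in[0,1]$. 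With these bounds the Leibniz rule applies and the interchange is licensed; the remainder of the argument is then purely formal and follows from the two invocations of Theorem \ref{tha} described above.
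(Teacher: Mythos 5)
Your proposal follows exactly the paper's route: the paper simply says ``inserting the above recurrence relation into (\ref{integral})'' yields the corollary, which is precisely your substitution of the Prabhakar recurrence into Theorem~\ref{tha}, splitting by linearity, and re-identifying both pieces via the same integral representation with $\beta$ replaced by $\beta+1$. Your added justification of differentiating under the integral sign is a welcome extra that the paper omits, but the argument is the same.
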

In next theorem, we define the Mellin transforms of the extended Mittag-Leffler function in terms of the Wright hypergeometric function which is defined by (see \cite{Wright1935}-\cite{Wrigt1935})  as:
$$_p\Psi_{q}(z)=\quad_p\Psi_{q}
                                 \left[
                                   \begin{array}{ccc}
                                     (\alpha_i, A_i)_{1,p} &  &  \\
                                      &  & ;z \\
                                     (\beta_j, B_j)_{1,q} &  &  \\
                                   \end{array}
                                 \right]$$
\begin{eqnarray}\label{6}
&=&\sum\limits_{n=0}^{\infty}\frac{\Gamma(\alpha_{1}+A_{1}n)\cdots\Gamma(\alpha_{p}+A_{p}n)}{\Gamma(\beta_{1}+B_{1}n)\cdots
\Gamma(\beta_{q}+B_{q}n)}\frac{z^{n}}{n!}
\end{eqnarray}
where $\beta_{r}$ and $\mu_{s}$  are real positive numbers such that
\begin{eqnarray*}
1+\sum\limits_{s=1}^{q}B_{s}-\sum\limits_{r=1}^{p}A_{r}\geq0.
\end{eqnarray*}
\begin{theorem}\label{thb}
The Mellin transform of extended Mittag-Leffler function is given by
\begin{eqnarray}\label{Mell}
\mathfrak{M}\Big\{E_{\alpha,\beta}^{\gamma,c;\lambda,\rho}(z;p);s\Big\}=\frac{\Gamma^{\lambda;\rho}(s)\Gamma(c+s-\gamma)}{\Gamma(\gamma)\Gamma(c-\gamma)}
._2\Psi_2\left[
         \begin{array}{cc}
           (c,1), (\gamma+s,1), & \\
            & ,z\\
           (\beta,\gamma),(c+2s,1),&  \\
         \end{array}
       \right]
\end{eqnarray}
where $p\geq0$, $\Re(c)>\Re(\gamma)>0$, $\Re(\alpha)>0$,  $\Re(\beta)>0$,  $\Re(\lambda)>0$,  $\Re(\rho)>0$.
\end{theorem}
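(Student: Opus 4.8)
The plan is to compute the Mellin transform directly from the series definition (\ref{Fextended}), reducing everything to the Mellin transform of a single confluent hypergeometric function. Writing $\mathfrak{M}\{f(p);s\}=\int_0^\infty p^{s-1}f(p)\,dp$, I would first interchange the Mellin integral with the summation over $n$ in (\ref{Fextended}); this term-by-term operation is justified on a suitable vertical strip in $s$ by the absolute convergence of the series together with the (absolute) integrability of each $B_p^{\lambda,\rho}(\gamma+n,c-\gamma)$ against $p^{s-1}$. This leaves the problem of evaluating $\int_0^\infty p^{s-1}B_p^{\lambda,\rho}(\gamma+n,c-\gamma)\,dp$ for each $n$.

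For that inner Mellin transform I would insert the integral representation (\ref{Ebeta}) of the extended beta function and apply Fubini's theorem to swap the $p$- and $t$-integrals, so that the $p$-integral acts only on the factor ${}_1F_1[\lambda;\rho;-p/(t(1-t))]$. The decisive computation is then the scaling substitution $p=t(1-t)u$, which pulls the $t$-dependence out as $(t(1-t))^s$ and reduces the $p$-integral to $\int_0^\infty u^{s-1}{}_1F_1[\lambda;\rho;-u]\,du=\Gamma^{\lambda;\rho}(s)$, the generalized gamma function appearing in the statement. What remains is the elementary beta integral $\int_0^1 t^{\gamma+n+s-1}(1-t)^{c-\gamma+s-1}\,dt=B(\gamma+n+s,\,c-\gamma+s)$, so the $n$-th Mellin coefficient equals $\Gamma^{\lambda;\rho}(s)\,B(\gamma+n+s,\,c-\gamma+s)$.

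It then remains to reassemble the series. I would rewrite $B(\gamma+n+s,c-\gamma+s)=\Gamma(\gamma+n+s)\Gamma(c-\gamma+s)/\Gamma(c+n+2s)$ and use $(c)_n=\Gamma(c+n)/\Gamma(c)$ together with $B(\gamma,c-\gamma)=\Gamma(\gamma)\Gamma(c-\gamma)/\Gamma(c)$; the factors of $\Gamma(c)$ cancel, pulling out the prefactor $\Gamma^{\lambda;\rho}(s)\Gamma(c-\gamma+s)/[\Gamma(\gamma)\Gamma(c-\gamma)]$. The residual sum $\sum_{n\ge0}\frac{\Gamma(c+n)\Gamma(\gamma+s+n)}{\Gamma(\beta+\alpha n)\Gamma(c+2s+n)}\frac{z^n}{n!}$ is, by the definition (\ref{6}), exactly the Wright function ${}_2\Psi_2$ with numerator parameters $(c,1),(\gamma+s,1)$ and denominator parameters $(\beta,\alpha),(c+2s,1)$, which completes the identification.

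The main obstacle I anticipate is not any single evaluation---each integral is classical---but the careful bookkeeping of convergence: one must pin down the common strip of $s$ in which the interchange of summation and integration and the Fubini swap are simultaneously valid (essentially $0<\Re(s)<\Re(\lambda)$ for the ${}_1F_1$ transform, using the decay ${}_1F_1[\lambda;\rho;-u]\sim \frac{\Gamma(\rho)}{\Gamma(\rho-\lambda)}u^{-\lambda}$ as $u\to\infty$, with $\Re(c)>\Re(\gamma)>0$ controlling the beta integral), and then extend the resulting identity by analytic continuation. I would also double-check the denominator pair of the ${}_2\Psi_2$: the Mittag-Leffler factor $\Gamma(\alpha n+\beta)$ forces the pair $(\beta,\alpha)$, so the $(\beta,\gamma)$ printed in (\ref{Mell}) appears to be a typographical slip for $(\beta,\alpha)$.
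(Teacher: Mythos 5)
Your proposal is correct and follows essentially the same route as the paper: the paper likewise reduces the $p$-integral via the substitution $u=p/(t(1-t))$ to the extended gamma function $\Gamma^{\lambda,\rho}(s)$ and then evaluates the remaining beta integral termwise to reassemble the ${}_2\Psi_2$, the only organizational difference being that you interchange the sum with the Mellin integral before inserting the integral representation of $B_p^{\lambda,\rho}$, whereas the paper first invokes the integral representation of Theorem \ref{tha}. Your remark that the denominator pair $(\beta,\gamma)$ in (\ref{Mell}) should read $(\beta,\alpha)$ is also borne out by the paper's own final series, which contains $\Gamma(\alpha n+\beta)$.
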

\begin{proof}
Taking the Mellin transform of extended Mittag-Leffler function defined in (\ref{Fextended}), we have
\begin{eqnarray}\label{Mellin}
\mathfrak{M}\Big\{E_{\alpha,\beta}^{\gamma,c;\lambda,\rho}(z;p);s\Big\}=
\int_{0}^{\infty}p^{s-1}E_{\alpha,\beta}^{\gamma,c;\lambda,\rho}(z;p)dp.
\end{eqnarray}
Using equation (\ref{integral}) in equation (\ref{Mellin}), we have
\begin{align}\label{Mellin1}
\mathfrak{M}\Big\{E_{\alpha,\beta}^{\gamma,c;\lambda,\rho}(z;p);s\Big\}&=\frac{1}{B(\gamma,c-\gamma)}\int_0^\infty p^{s-1}
\Big\{\int_0^1t^{\gamma-1}(1-t)^{c-\gamma-1}._1F_1\Big[\lambda;\rho;-\frac{p}{t(1-t)}\Big]\Big\}\notag\\
&\times E_{\alpha,\beta}^{c}(tz)dtdp.
\end{align}
Interchanging the order of integrations in equation (\ref{Mellin1}), we have
\begin{align}\label{Mellin2}
\mathfrak{M}\Big\{E_{\alpha,\beta}^{\gamma,c;\lambda,\rho}(z;p);s\Big\}&=\frac{1}{B(\gamma,c-\gamma)}\int_0^1\Big[t^{\gamma-1}(1-t)^{c-\gamma-1}
E_{\alpha,\beta}^{c}(tz)\Big]\notag\\
&\times\int_0^\infty p^{s-1}._1F_1\Big[\lambda;\rho;-\frac{p}{t(1-t)}\Big]
dpdt.
\end{align}
Now, taking $u=\frac{p}{t(1-t)}$ in  second integral of equation (\ref{Mellin2}), we get
\begin{eqnarray*}
\int_0^\infty p^{s-1}._1F_1\Big[\lambda;\rho;-\frac{p}{t(1-t)}\Big]
dp &=&\int_0^\infty u^{s-1}t^s(1-t)^s._1F_1\Big[\lambda;\rho;-u\Big]du\\
&=& t^s(1-t)^s\int_0^\infty u^{s-1}._1F_1\Big[\lambda;\rho;-u\Big]du
\end{eqnarray*}
\begin{eqnarray}\label{Mellin3}
&=&t^s(1-t)^s\Gamma^{\lambda,\rho}(s),
\end{eqnarray}
where $\Gamma^{\lambda,\rho}(s)$ is the extended gamma function defined by \cite{Ozergin}.\\
Using equation (\ref{Mellin3}) and the definition of Prabhakar's Mittag-Leffler function in equation (\ref{Mellin2}), we get
$$\mathfrak{M}\Big\{E_{\alpha,\beta}^{\gamma,c;\lambda,\rho}(z;p);s\Big\}$$
\begin{eqnarray*}
&=&\frac{\Gamma^{\lambda,\rho}(s)}{B(\gamma,c-\gamma)}\int_0^1t^{\gamma+s-1}(1-t)^{c+s-\gamma-1}
\sum\limits_{n=0}^{\infty}\frac{(c)_nz^n}{\Gamma(\alpha n+\beta)n!}\int_0^1t^{\gamma+n+s-1}(1-t)^{c+s-\gamma-1}
dt\\
&=&\frac{\Gamma^{\lambda,\rho}(s)}{B(\gamma,c-\gamma)}\sum\limits_{n=0}^{\infty}\frac{(c)_nz^n}{\Gamma(\alpha n+\beta)n!}\frac{\Gamma(\gamma+n+s)\Gamma(c+s-\gamma)}{\Gamma(c+n+2s)}\\
&=&\frac{\Gamma^{\lambda,\rho}(s)\Gamma(c+s-\gamma)}{\Gamma(\gamma)\Gamma(c-\gamma)}
\sum\limits_{n=0}^{\infty}\frac{\Gamma(c+n)z^n}{\Gamma(\alpha n+\beta)n!}\frac{\Gamma(\gamma+n+s)}{\Gamma(c+n+2s)}\\
&=&\frac{\Gamma^{\lambda,\rho}(s)\Gamma(c+s-\gamma)}{\Gamma(\gamma)\Gamma(c-\gamma)}._2\Psi_2\left[
         \begin{array}{cc}
           (c,1), (\gamma+s,1), & \\
            & ,z\\
           (\beta,\gamma),(c+2s,1),&  \\
         \end{array}
       \right]
\end{eqnarray*}
which is the desired proof.
\end{proof}
\begin{corollary}\label{cor1}
Taking $s=1$ in Theorem \ref{thb} and using $\Gamma^{\lambda,\rho}(1)=\frac{\Gamma(\rho)\Gamma(\lambda-1)}{\Gamma(\lambda)\Gamma(\rho-1)}$ (see \cite{Ozergin}) in equation (\ref{Mell}), we get
\begin{eqnarray}
\int_{0}^{\infty}E_{\alpha,\beta}^{\gamma,c;\lambda,\rho}(z;p)dp
=\frac{\Gamma(\rho)\Gamma(\lambda-1)\Gamma(c+1-\gamma)}{\Gamma(\lambda)\Gamma(\rho-1)\Gamma(\gamma)\Gamma(c-\gamma)}
._2\Psi_2\left[
         \begin{array}{cc}
           (c,1), (\gamma+1,1), & \\
            & ,z\\
           (\beta,\gamma),(c+2,1),&  \\
         \end{array}
       \right]
\end{eqnarray}
\end{corollary}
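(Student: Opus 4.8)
The plan is to specialize the Mellin-transform identity of Theorem~\ref{thb} at the single point $s=1$ and simply read off both sides. First I would recall from the definition (\ref{Mellin}) that evaluating the Mellin transform at $s=1$ collapses the weight $p^{s-1}$ to $1$, so that
\begin{eqnarray*}
\mathfrak{M}\Big\{E_{\alpha,\beta}^{\gamma,c;\lambda,\rho}(z;p);1\Big\}
&=&\int_0^\infty p^{1-1}E_{\alpha,\beta}^{\gamma,c;\lambda,\rho}(z;p)\,dp\\
&=&\int_0^\infty E_{\alpha,\beta}^{\gamma,c;\lambda,\rho}(z;p)\,dp;
\end{eqnarray*}
this is precisely the left-hand side of the asserted formula, so no further work is needed on that side.

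Next I would substitute $s=1$ into the right-hand side of (\ref{Mell}). This replaces the extended gamma factor $\Gamma^{\lambda,\rho}(s)$ by $\Gamma^{\lambda,\rho}(1)$ and the factor $\Gamma(c+s-\gamma)$ by $\Gamma(c+1-\gamma)$, while inside the ${}_2\Psi_2$ the upper parameter $(\gamma+s,1)$ becomes $(\gamma+1,1)$ and the lower parameter $(c+2s,1)$ becomes $(c+2,1)$; the remaining entries $(c,1)$ and $(\beta,\gamma)$ are independent of $s$ and stay put.

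The only external ingredient is the closed evaluation $\Gamma^{\lambda,\rho}(1)=\frac{\Gamma(\rho)\Gamma(\lambda-1)}{\Gamma(\lambda)\Gamma(\rho-1)}$ of \cite{Ozergin}, which I would insert directly into the prefactor. Combining it with $\Gamma(c+1-\gamma)$ over the denominator $\Gamma(\gamma)\Gamma(c-\gamma)$ produces exactly the constant $\frac{\Gamma(\rho)\Gamma(\lambda-1)\Gamma(c+1-\gamma)}{\Gamma(\lambda)\Gamma(\rho-1)\Gamma(\gamma)\Gamma(c-\gamma)}$ multiplying the specialized Wright function, which is the claimed identity. Because every step is a mechanical substitution, there is no genuine obstacle; the one point worth confirming is that the integral on the left actually converges, so that the Mellin transform is defined at $s=1$, and this is guaranteed by the hypotheses $\Re(\lambda)>0$, $\Re(\rho)>0$ that underlie the existence of $\Gamma^{\lambda,\rho}(1)$.
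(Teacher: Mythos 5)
Your proposal is correct and matches the paper's own (implicit) argument exactly: the corollary is obtained by the mechanical substitution $s=1$ into the Mellin transform identity of Theorem \ref{thb}, together with the cited evaluation $\Gamma^{\lambda,\rho}(1)=\frac{\Gamma(\rho)\Gamma(\lambda-1)}{\Gamma(\lambda)\Gamma(\rho-1)}$ from \cite{Ozergin}. The paper offers no further justification beyond this substitution, so your write-up is, if anything, slightly more careful in noting the convergence question.
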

\begin{corollary}
Taking $\lambda=\rho$ in Corollary \ref{cor1}, we get the following result of extended Mittag-Leffler function defined by \cite{OY}
\begin{eqnarray}
\int_{0}^{\infty}E_{\alpha,\beta}^{\gamma,c}(z;p)dp
=\frac{\Gamma(c+1-\gamma)}{\Gamma(\gamma)\Gamma(c-\gamma)}
._2\Psi_2\left[
         \begin{array}{cc}
           (c,1), (\gamma+1,1), & \\
            & ,z\\
           (\beta,\gamma),(c+2,1),&  \\
         \end{array}
       \right]
\end{eqnarray}
\end{corollary}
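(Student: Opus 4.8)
The plan is to obtain this corollary as a pure specialization of Corollary~\ref{cor1} at $\lambda=\rho$, so that no new integral has to be evaluated; the argument is an algebraic simplification together with the reduction already recorded in the Remark. First I would recall that Corollary~\ref{cor1} expresses $\int_0^\infty E_{\alpha,\beta}^{\gamma,c;\lambda,\rho}(z;p)\,dp$ as the product of the scalar prefactor $\dfrac{\Gamma(\rho)\Gamma(\lambda-1)\Gamma(c+1-\gamma)}{\Gamma(\lambda)\Gamma(\rho-1)\Gamma(\gamma)\Gamma(c-\gamma)}$ with a Wright function ${}_2\Psi_2$ whose parameters $(c,1),(\gamma+1,1);(\beta,\gamma),(c+2,1)$ and argument $z$ do not involve $\lambda$ or $\rho$. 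Hence the ${}_2\Psi_2$ factor is inert under the specialization and can be carried through verbatim.

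The key step is to set $\lambda=\rho$ in the remaining prefactor. Then
\[
\frac{\Gamma(\rho)\,\Gamma(\lambda-1)}{\Gamma(\lambda)\,\Gamma(\rho-1)}\Bigg|_{\lambda=\rho}=\frac{\Gamma(\rho)\,\Gamma(\rho-1)}{\Gamma(\rho)\,\Gamma(\rho-1)}=1,
\]
so that only $\dfrac{\Gamma(c+1-\gamma)}{\Gamma(\gamma)\Gamma(c-\gamma)}$ survives. At the same time, on the left-hand side I would invoke part~(1) of the Remark: putting $\rho=\lambda$ in the defining series~(\ref{Fextended}) reduces the inner confluent factor by the identity ${}_1F_1[\lambda;\lambda;w]=e^{w}$, whence $B_p^{\lambda,\lambda}(x,y)=B_p(x,y)$ and $E_{\alpha,\beta}^{\gamma,c;\lambda,\rho}(z;p)$ collapses to the \"{O}zarslan--Yilmaz extended function $E_{\alpha,\beta}^{\gamma,c}(z;p)$ of~(\ref{a5}). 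Substituting both facts into the formula of Corollary~\ref{cor1} produces precisely the asserted identity.

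Since the whole derivation is formal once Corollary~\ref{cor1} is available, I do not anticipate a substantive obstacle. The one point deserving a word of care is the legitimacy of the cancellation $\Gamma(\lambda-1)/\Gamma(\rho-1)\to1$: it is valid exactly when $\Gamma(\rho-1)$ is finite and nonzero, i.e. when $\rho-1$ is not a non-positive integer, which holds under the standing hypothesis $\Re(\rho)>0$ with the pole at $\rho=1$ excluded. The interchange of ``set $\lambda=\rho$'' with the integration over $p$ needs no separate justification, because the specialization is applied pointwise in $p$ to the integrand and the integrability established in Corollary~\ref{cor1} is thereby inherited.
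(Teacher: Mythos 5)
Your proposal is correct and is essentially the paper's own argument: the paper obtains this corollary precisely by setting $\lambda=\rho$ in Corollary \ref{cor1}, whereupon the prefactor $\Gamma(\rho)\Gamma(\lambda-1)/(\Gamma(\lambda)\Gamma(\rho-1))$ collapses to $1$ and the left-hand side reduces to the \"{O}zarslan--Yilmaz function via $B_p^{\lambda,\lambda}=B_p$. Your additional remarks on the legitimacy of the cancellation and the inertness of the ${}_2\Psi_2$ factor are sound but not needed beyond what the paper already records.
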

\begin{corollary}
Taking the inverse Mellin transform on both sides of equation (\ref{Mell}), we get the following elegant complex integral representation
\begin{align}
E_{\alpha,\beta}^{\gamma,c;\lambda,\rho}(z;p)&=\frac{1}{2\pi\iota\Gamma(\gamma)\Gamma(c-\gamma)}
\int_{\nu-\iota\infty}^{\nu+\iota\infty}\Gamma^{\lambda,\rho}(s)\Gamma(c+s-\gamma)\notag\\
&\times{}_2\Psi_2\left[
         \begin{array}{cc}
           (c,1), (\gamma+s,1), & \\
            & ,z\\
           (\beta,\gamma),(c+2s,1),&  \\
         \end{array}
       \right]p^{-s}ds,
\end{align}
where $\nu>0$.
\end{corollary}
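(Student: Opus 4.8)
The plan is to derive this representation directly from Theorem \ref{thb} by appealing to the Mellin inversion theorem. Recall that a function $f(p)$ on $(0,\infty)$ is recovered from its Mellin transform $F(s)=\mathfrak{M}\{f(p);s\}$ through
\[
f(p)=\frac{1}{2\pi\iota}\int_{\nu-\iota\infty}^{\nu+\iota\infty}F(s)\,p^{-s}\,ds,
\]
provided $F$ is holomorphic on a vertical strip containing the line $\Re(s)=\nu$ and decays suitably as $|\Im(s)|\to\infty$. For us the relevant Mellin transform, taken with respect to the variable $p$, is exactly the right-hand side of (\ref{Mell}).

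First I would set $f(p)=E_{\alpha,\beta}^{\gamma,c;\lambda,\rho}(z;p)$ and insert the expression for $F(s)$ supplied by (\ref{Mell}) of Theorem \ref{thb} into the inversion integral. Pulling the $z$- and $s$-independent constant $\frac{1}{\Gamma(\gamma)\Gamma(c-\gamma)}$ outside the contour integral then reproduces verbatim the claimed formula, with the factors $\Gamma^{\lambda,\rho}(s)\Gamma(c+s-\gamma)$ and the ${}_2\Psi_2$ kernel retained inside the integrand and $p^{-s}$ supplied by the inversion kernel. No genuine computation is involved once the inversion theorem is granted.

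Accordingly, the only substantive point is to justify that the hypotheses of the Mellin inversion theorem are met, which is what I expect to be the main obstacle. Here I would argue as follows. Since $\Re(c)>\Re(\gamma)>0$, the factor $\Gamma(c+s-\gamma)$ is analytic for $\Re(s)>\Re(\gamma-c)$, while the extended gamma factor $\Gamma^{\lambda,\rho}(s)$ of \cite{Ozergin} is analytic for $\Re(s)>0$; choosing the abscissa $\nu>0$ therefore places the contour to the right of every pole contributed by these two gamma factors, which is precisely the restriction $\nu>0$ stated in the corollary. The Wright factor ${}_2\Psi_2$ is entire in $s$ under the convergence condition $1+\sum_{s=1}^{q}B_s-\sum_{r=1}^{p}A_r\ge 0$ already imposed in (\ref{6}). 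Finally, the Stirling asymptotics of the gamma functions along vertical lines furnish the decay of the integrand as $|\Im(s)|\to\infty$, guaranteeing convergence of the Bromwich-type integral. With these three observations the inversion theorem applies and the stated complex integral representation follows at once.
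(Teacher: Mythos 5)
Your proposal matches the paper's own derivation exactly: the paper obtains this corollary simply by applying the Mellin inversion theorem to equation (\ref{Mell}) of Theorem \ref{thb}, with the factor $p^{-s}$ supplied by the inversion kernel and the constant $\frac{1}{\Gamma(\gamma)\Gamma(c-\gamma)}$ pulled outside. Your additional remarks on the analyticity and decay hypotheses go beyond what the paper records, but the substance of the argument is the same.
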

%%%%%%%%%%%%%%%%%%%%%%%%%%%%%%%%%%%%%%%%%%%%%%%%%%%%%%%%%%%%%%%%%%%%%%%%%%%%%%%%%%%%%%%%%%%%%%%
\section{Derivative properties of extended Mittag-Leffler function }
In this section, we define further extension of extended  Riemann-Liouville fractional derivative, extended fractional derivative formula of further extended Mittag-Leffler function and some derivative properties of extended Mittag-Leffler function.
\begin{definition}\label{defa}
The well-known Riemann-Liouville fractional derivative of order $\mu$ is defined by
\begin{eqnarray}
\mathfrak{D}_{x}^{\mu}=\frac{1}{\Gamma(-\mu)}\int_0^xf(t)(x-t)^{-\mu-1}dt, \Re(\mu)>0.
\end{eqnarray}
For the case $m-1<\Re(\mu)<m$ where $m=1,2,\cdots$, it follows
\begin{eqnarray*}
\mathfrak{D}_{x}^{\mu}=\frac{d^m}{dx^m}\mathfrak{D}_{x}^{\mu-m}\Big\{f(z)\Big\}
\end{eqnarray*}
\begin{eqnarray}
=\frac{d^m}{dx^m}\Big\{\frac{1}{\Gamma(-\mu+m)}\int_0^xf(t)(x-t)^{-\mu+m-1}dt\Big\}, \Re(\mu)>0.
\end{eqnarray}
\end{definition}
\begin{definition}\label{defb}(see \cite{OY})
The extended Riemann-Liouville fractional derivative of order $\mu$ is defined by
\begin{eqnarray}
\mathfrak{D}_{x}^{\mu,p}=\frac{1}{\Gamma(-\mu)}\int_0^xf(t)(x-t)^{-\mu-1}\exp\Big(-\frac{px^2}{t(x-t)}\Big) dt, \Re(\mu)>0.
\end{eqnarray}
For the case $m-1<\Re(\mu)<m$ where $m=1,2,\cdots$, it follows
\begin{eqnarray*}
\mathfrak{D}_{x}^{\mu,p}=\frac{d^m}{dx^m}\mathfrak{D}_{x}^{\mu-m,p}\Big\{f(z)\Big\}
\end{eqnarray*}
\begin{eqnarray}
=\frac{d^m}{dx^m}\Big\{\frac{1}{\Gamma(-\mu+m)}\int_0^xf(t)(x-t)^{-\mu+m-1}\exp\Big(-\frac{px^2}{t(x-t)}\Big) dt\Big\}, \Re(\mu)>0.
\end{eqnarray}
\end{definition}

\begin{definition}\label{defc}
Here, we define the extension of extended Riemann-Liouville fractional derivative of order $\mu$ as
\begin{eqnarray}\label{Efrac}
\mathfrak{D}_{x}^{\mu,p}=\frac{1}{\Gamma(-\mu)}\int_0^xf(t)(x-t)^{-\mu-1}.
_1F_1\Big[\lambda;\rho;\Big(-\frac{px^2}{t(x-t)}\Big)\Big] dt, \Re(\mu)>0.
\end{eqnarray}
For the case $m-1<\Re(\mu)<m$ where $m=1,2,\cdots$, it follows
\begin{eqnarray*}
\mathfrak{D}_{x}^{\mu,p}=\frac{d^m}{dx^m}\mathfrak{D}_{x}^{\mu-m,p}\Big\{f(z)\Big\}
\end{eqnarray*}
\begin{eqnarray}
=\frac{d^m}{dx^m}\Big\{\frac{1}{\Gamma(-\mu+m)}\int_0^xf(t)(x-t)^{-\mu+m-1}
._1F_1\Big[\lambda;\rho;\Big(-\frac{px^2}{t(x-t)}\Big)\Big] dt\Big\}, \Re(\mu)>0.
\end{eqnarray}
\end{definition}
Obviously, if $\lambda=\rho$, then definition \ref{defc} reduces to extended fractional derivative \ref{defb}. Similarly, if $\lambda=\rho$ and $p=0$, then definition \ref{defc} reduces to the well-known Riemann-Liouville fractional derivative \ref{defa}.
\begin{theorem}
Let $p\geq0$, $\Re(\mu)>\Re(\delta)>0$, $\Re(\alpha)>0$, $\Re(\beta)>0$. Then
\begin{eqnarray}
\mathfrak{D}_{z}^{\delta-\mu,p}\Big\{z^{\delta-1}E_{\alpha,\beta}^{c}(z)\Big\}=\frac{z^{\mu-1}B(\delta,c-\delta)}
{\Gamma(\mu-\delta)}E_{\alpha,\beta}^{\delta,\mu;\lambda,\rho}(z;p)
\end{eqnarray} 
\end{theorem}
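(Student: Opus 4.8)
The plan is to evaluate the operator of Definition~\ref{defc} directly in its integral form. Since $\Re(\mu)>\Re(\delta)$, the order $\delta-\mu$ has negative real part, so that $-(\delta-\mu)=\mu-\delta$ has positive real part; hence the factor $(z-t)^{\mu-\delta-1}$ is integrable at the upper endpoint and the regularizing derivative $d^m/dz^m$ appearing in Definition~\ref{defc} is not needed. Replacing $\mu$ by $\delta-\mu$ in (\ref{Efrac}), I would begin from
\begin{align*}
\mathfrak{D}_{z}^{\delta-\mu,p}\Big\{z^{\delta-1}E_{\alpha,\beta}^{c}(z)\Big\}
&=\frac{1}{\Gamma(\mu-\delta)}\int_0^z t^{\delta-1}E_{\alpha,\beta}^{c}(t)\,(z-t)^{\mu-\delta-1}\,
{}_1F_1\Big[\lambda;\rho;-\frac{pz^2}{t(z-t)}\Big]\,dt .
\end{align*}

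Next I would insert the Prabhakar series (\ref{a3}) for $E_{\alpha,\beta}^{c}(t)$, so that the factor $t^{\delta-1}E_{\alpha,\beta}^{c}(t)$ becomes $\sum_{n\ge0}\frac{(c)_n}{\Gamma(\alpha n+\beta)\,n!}\,t^{\delta+n-1}$. Under the stated hypotheses the series converges uniformly on compact $t$-intervals, which licenses the interchange of summation and integration and yields
\begin{align*}
\frac{1}{\Gamma(\mu-\delta)}\sum_{n=0}^{\infty}\frac{(c)_n}{\Gamma(\alpha n+\beta)\,n!}
\int_0^z t^{\delta+n-1}(z-t)^{\mu-\delta-1}\,{}_1F_1\Big[\lambda;\rho;-\frac{pz^2}{t(z-t)}\Big]\,dt .
\end{align*}

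The decisive step is the change of variable $t=zu$. Then $z-t=z(1-u)$ and, crucially, $t(z-t)=z^2u(1-u)$, so the argument of the confluent hypergeometric kernel collapses to $-p/(u(1-u))$, free of $z$, while the powers of $z$ combine as $z^{\delta+n-1}\,z^{\mu-\delta-1}\,z=z^{\mu+n-1}$. Each integral therefore reduces to
\begin{align*}
z^{\mu+n-1}\int_0^1 u^{\delta+n-1}(1-u)^{\mu-\delta-1}\,{}_1F_1\Big[\lambda;\rho;-\frac{p}{u(1-u)}\Big]\,du
= z^{\mu+n-1}\,B_p^{\lambda,\rho}(\delta+n,\mu-\delta),
\end{align*}
by the very definition (\ref{Ebeta}) of the extension of the extended beta function. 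I regard this kernel transformation as the heart of the argument and essentially its only non-mechanical point: it is exactly why the weight $px^2/(t(x-t))$ was built into Definition~\ref{defc}.

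Finally I would factor $z^{\mu-1}$ out of the sum, leaving $\frac{z^{\mu-1}}{\Gamma(\mu-\delta)}\sum_{n\ge0}\frac{(c)_n}{\Gamma(\alpha n+\beta)}\,B_p^{\lambda,\rho}(\delta+n,\mu-\delta)\,\frac{z^n}{n!}$, and compare this term by term with the defining series (\ref{Fextended}) of $E_{\alpha,\beta}^{\delta,\mu;\lambda,\rho}(z;p)$, whose general term carries the beta value $B_p^{\lambda,\rho}(\delta+n,\mu-\delta)$ and the normalizer $B(\delta,\mu-\delta)$. Multiplying and dividing by $B(\delta,\mu-\delta)$ to supply that normalizer then produces precisely the prefactor $\frac{z^{\mu-1}B(\delta,\mu-\delta)}{\Gamma(\mu-\delta)}$ multiplying $E_{\alpha,\beta}^{\delta,\mu;\lambda,\rho}(z;p)$, which is the asserted identity. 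The one point I would flag is the matching of Pochhammer symbols: the computation inherits $(c)_n$ from the input Prabhakar function, whereas the output series contains $(\mu)_n$, so the identity is to be read with the upper parameter of the input identified with the second superscript of the output (that is, $c=\mu$, consistent with the factor $B(\delta,c-\delta)$ written on the right). The remaining work — the gamma-normalization and the convergence underpinning the interchange — is routine.
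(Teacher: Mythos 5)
Your argument is correct and reaches the same identity, but it closes the proof differently from the paper. The paper's proof, after the same substitution $u=t/z$ (which is the decisive kernel-collapsing step in both versions), keeps the Prabhakar function $E_{\alpha,\beta}^{c}(uz)$ intact inside the integral and simply recognizes the resulting expression as the integral representation already established in Theorem \ref{tha}, reading off the factor $B(\delta,\mu-\delta)/\Gamma(\mu-\delta)$ by comparison with (\ref{integral}). You instead expand $E_{\alpha,\beta}^{c}(t)$ into its series (\ref{a3}) before substituting, identify each term as $B_p^{\lambda,\rho}(\delta+n,\mu-\delta)$ via the definition (\ref{Ebeta}), and re-sum against the defining series (\ref{Fextended}); in effect you inline the proof of Theorem \ref{tha} rather than citing it. The paper's route is shorter because it reuses that lemma; yours is self-contained and makes the term-by-term structure explicit. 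Your closing remark about the Pochhammer symbols is well taken and is in fact a point the paper glosses over: its own comparison with (\ref{integral}) also requires the upper parameter of the input Prabhakar function to equal $\mu$ (so that $E_{\alpha,\beta}^{c}(uz)=E_{\alpha,\beta}^{\mu}(uz)$ and the normalizer is $B(\delta,\mu-\delta)$), which is the only reading under which the statement's prefactor $B(\delta,c-\delta)$ is consistent.
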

\begin{proof}
Replacing $\mu$ by $\delta-\mu$ in the definition of extension of extended fractional derivative formula (\ref{Efrac}), we have
$$\mathfrak{D}_{z}^{\delta-\mu,p}\Big\{z^{\delta-1}E_{\alpha,\beta}^{c}(z)\Big\}$$
\begin{eqnarray*}
&=&\frac{1}{\Gamma(\mu-\delta)}\int_0^zt^{\delta-1}E_{\alpha,\beta}^{c}(t)(z-t)^{-\delta+\mu-1}.
_1F_1\Big[\lambda;\rho;\Big(-\frac{pz^2}{t(z-t)}\Big)\Big] dt\\
&=&\frac{z^{-\delta+\mu-1}}{\Gamma(\mu-\delta)}\int_0^zt^{\delta-1}E_{\alpha,\beta}^{c}(t)(1-\frac{t}{z})^{-\delta+\mu-1}.
_1F_1\Big[\lambda;\rho;\Big(-\frac{pz^2}{t(z-t)}\Big)\Big] dt
\end{eqnarray*}
Taking $u=\frac{t}{z}$ in above equation, we have
$$\mathfrak{D}_{z}^{\delta-\mu,p}\Big\{z^{\delta-1}E_{\alpha,\beta}^{c}(z)\Big\}$$
\begin{eqnarray}\label{Frac}
&=&\frac{z^{\mu-1}}{\Gamma(\mu-\delta)}\int_0^z u^{\delta-1}(1-u)^{-\delta+\mu-1}.
_1F_1\Big[\lambda;\rho;\Big(-\frac{p}{u(1-u)}\Big)\Big] E_{\alpha,\beta}^{c}(uz)du.
\end{eqnarray}
Comparing equation (\ref{Frac}) with equation (\ref{integral}), we get the desired result.
\end{proof}
In the following theorem we define the derivative properties of extended Mittag-Leffler function.
\begin{theorem}
For the extended Mittag-Leffler function, we have the following derivative formula:
\begin{eqnarray}\label{der}
\frac{d^n}{dz^n}\Big\{E_{\alpha,\beta}^{\gamma,c;\lambda,\rho}(z;p)\Big\}
=\frac{(c)_n(\lambda)_n}{(\rho)_n}E_{\alpha,\beta+n\alpha}^{\gamma+n,c+ n;\lambda+n,\rho+n}(z;p).
\end{eqnarray}
\end{theorem}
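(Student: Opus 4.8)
I see two natural routes, and I would take the one based on the integral representation (\ref{integral}), since it cleanly separates the $z$-dependence (carried solely by the Prabhakar function $E_{\alpha,\beta}^{c}$) from the $p$-dependence (carried by the confluent hypergeometric kernel). The alternative is to differentiate the defining series (\ref{Fextended}) term by term; both reduce to the same bookkeeping. First I would record the elementary derivative rule for Prabhakar's function, obtained by differentiating (\ref{a3}) termwise and re-indexing:
\[
\frac{d^{n}}{dw^{n}}E_{\alpha,\beta}^{c}(w)=(c)_n\,E_{\alpha,\beta+n\alpha}^{c+n}(w).
\]

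Applying $\tfrac{d^n}{dz^n}$ under the integral sign in (\ref{integral}) --- justified by local uniform convergence --- the only $z$-dependent factor is $E_{\alpha,\beta}^{c}(tz)$, and the chain rule contributes an extra $t^{n}$. Thus
\[
\frac{d^{n}}{dz^{n}}E_{\alpha,\beta}^{\gamma,c;\lambda,\rho}(z;p)=\frac{(c)_n}{B(\gamma,c-\gamma)}\int_0^1 t^{\gamma+n-1}(1-t)^{c-\gamma-1}\,{}_1F_1\Big[\lambda;\rho;-\frac{p}{t(1-t)}\Big]E_{\alpha,\beta+n\alpha}^{c+n}(tz)\,dt.
\]
Reading off the parameters, the factor $t^{n}$ shifts $\gamma\mapsto\gamma+n$, the rule above shifts $\beta\mapsto\beta+n\alpha$ and $c\mapsto c+n$ and supplies the constant $(c)_n$, while $(c+n)-(\gamma+n)=c-\gamma$ keeps the $(1-t)$ exponent consistent. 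Comparing with the integral representation of the target function $E_{\alpha,\beta+n\alpha}^{\gamma+n,c+n;\lambda+n,\rho+n}$, everything matches except the confluent kernel and the beta normalisation.

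The decisive, and hardest, step is to account for the remaining factor $(\lambda)_n/(\rho)_n$ together with the shift $(\lambda,\rho)\mapsto(\lambda+n,\rho+n)$ in the kernel. Here I would invoke the Kummer differentiation identity
\[
\frac{d^{n}}{dx^{n}}\,{}_1F_1\big[\lambda;\rho;x\big]=\frac{(\lambda)_n}{(\rho)_n}\,{}_1F_1\big[\lambda+n;\rho+n;x\big]
\]
inside the integral (\ref{Ebeta}), combined with the elementary ratio $B(\gamma+n,c-\gamma)/B(\gamma,c-\gamma)=(\gamma)_n/(c)_n$. The main obstacle I anticipate is exactly the compatibility of these two operations: the differentiation that generated the parameter shifts was performed in the variable $z$, whereas the Kummer identity differentiates in the argument $-p/(t(1-t))$ of the confluent function, on which the $z$-derivative does not act. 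I would therefore have to supply an additional relation linking the two and verify that the aggregate multiplicative constant collapses to precisely $(c)_n(\lambda)_n/(\rho)_n$; this reconciliation of the Pochhammer factors with the kernel shift, rather than any convergence or interchange-of-limits issue, is the crux of the argument.
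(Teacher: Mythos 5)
Your route through the integral representation (\ref{integral}) is genuinely different from the paper's proof, which differentiates the series (\ref{Fextended}) once, asserts without computation that $\frac{d}{dz}E_{\alpha,\beta}^{\gamma,c;\lambda,\rho}(z;p)=\frac{c\lambda}{\rho}E_{\alpha,\beta+\alpha}^{\gamma+1,c+1;\lambda+1,\rho+1}(z;p)$, and iterates. Up to your intermediate formula your computation is correct, but the ``additional relation'' you defer to the end does not exist, and this is a genuine gap that cannot be closed. Your integral is exactly the representation of Theorem~\ref{tha} for the shifted parameters $(\gamma+n,\,c+n,\,\beta+n\alpha)$, since $(c+n)-(\gamma+n)=c-\gamma$; hence it equals
\begin{equation*}
(c)_n\,\frac{B(\gamma+n,c-\gamma)}{B(\gamma,c-\gamma)}\,E_{\alpha,\beta+n\alpha}^{\gamma+n,c+n;\lambda,\rho}(z;p)
=(\gamma)_n\,E_{\alpha,\beta+n\alpha}^{\gamma+n,c+n;\lambda,\rho}(z;p),
\end{equation*}
with the kernel parameters $(\lambda,\rho)$ untouched and no factor $(\lambda)_n/(\rho)_n$. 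The Kummer identity is inapplicable for precisely the reason you note: nothing in the problem differentiates the argument $-p/(t(1-t))$ of the ${}_1F_1$, so there is no mechanism that could shift $(\lambda,\rho)$ to $(\lambda+n,\rho+n)$ or produce the ratio of Pochhammer symbols. Your computation is already finished; it just does not land on (\ref{der}).

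The reconciliation fails because the statement itself is not correct as printed. Take $p=0$: then $B_0^{\lambda,\rho}(x,y)=B(x,y)$, so both $E_{\alpha,\beta}^{\gamma,c;\lambda,\rho}(z;0)$ and $E_{\alpha,\beta+n\alpha}^{\gamma+n,c+n;\lambda+n,\rho+n}(z;0)$ are independent of $\lambda$ and $\rho$, and (\ref{der}) would force $(\lambda)_n/(\rho)_n$ to take the same value for all admissible $\lambda\neq\rho$, which is false. Direct termwise differentiation of (\ref{Fextended}) (the paper's own claimed method) confirms this: using $(c)_{m+1}=c\,(c+1)_m$ and $B(\gamma+1,c-\gamma)/B(\gamma,c-\gamma)=\gamma/c$, one gets $\frac{d}{dz}E_{\alpha,\beta}^{\gamma,c;\lambda,\rho}(z;p)=\gamma\,E_{\alpha,\beta+\alpha}^{\gamma+1,c+1;\lambda,\rho}(z;p)$, and by iteration the corrected identity $\frac{d^n}{dz^n}E_{\alpha,\beta}^{\gamma,c;\lambda,\rho}(z;p)=(\gamma)_n\,E_{\alpha,\beta+n\alpha}^{\gamma+n,c+n;\lambda,\rho}(z;p)$, consistent with the classical Prabhakar case at $p=0$. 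So the missing step in your proposal reflects an error in the theorem (and in the paper's unproved first-derivative assertion), not a missing idea on your part; what your argument actually establishes is the corrected formula above.
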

\begin{proof}
Taking derivative of equation (\ref{Fextended}) with respect $z$, we have
\begin{eqnarray}\label{der1}
\frac{d}{dz}\Big\{E_{\alpha,\beta}^{\gamma,c;\lambda,\rho}(z;p)\Big\}
=\frac{c\lambda}{\rho}E_{\alpha,\beta+\alpha}^{\gamma+1,c+ 1;\lambda+1,\rho+1}(z;p).
\end{eqnarray}
Again taking derivative of equation (\ref{der1}), with respect to $z$, we have
\begin{eqnarray}
\frac{d^2}{dz^2}\Big\{E_{\alpha,\beta}^{\gamma,c;\lambda,\rho}(z;p)\Big\}
=\frac{c(c+1)\lambda(\lambda+1)}{\rho(\rho+1)}E_{\alpha,\beta+2\alpha}^{\gamma+2,c+ 2;\lambda+2,\rho+2}(z;p).
\end{eqnarray}
Continuing in this way up to $n$, we get the required result. 
\end{proof}
\begin{theorem}
The following differentiation formula holds for the extended Mittag-Leffler function
\begin{eqnarray}
\frac{d^n}{dz^n}\Big\{z^{\beta-1}E_{\alpha,\beta}^{\gamma,c;\lambda,\rho}(\mu z^\alpha;p)\Big\}
=z^{\beta-n-1}E_{\alpha,\beta-n}^{\gamma+n,c+ n;\lambda+n,\rho+n}(\mu z^\alpha;p).
\end{eqnarray}
\end{theorem}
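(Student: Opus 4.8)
The plan is to prove the identity by expanding the left-hand side as a power series in $z$ and then differentiating it term by term. First I would substitute the defining series (\ref{Fextended}) with argument $\mu z^\alpha$ and absorb the prefactor $z^{\beta-1}$, which gives
\begin{equation*}
z^{\beta-1}E_{\alpha,\beta}^{\gamma,c;\lambda,\rho}(\mu z^\alpha;p)=\sum_{k=0}^{\infty}\frac{B_p^{\lambda,\rho}(\gamma+k,c-\gamma)}{B(\gamma,c-\gamma)}\frac{(c)_k\,\mu^k}{\Gamma(\alpha k+\beta)\,k!}\,z^{\alpha k+\beta-1}.
\end{equation*}
Since $\Re(\alpha)>0$, the factor $\Gamma(\alpha k+\beta)$ in the denominator makes the coefficients decay super-exponentially, so the series represents an entire function of $z$ and converges locally uniformly together with all of its $z$-derivatives; I would record this at the outset to legitimise the term-by-term interchange of $d^n/dz^n$ with the summation.

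The key computation is then the elementary power rule applied to each monomial,
\begin{equation*}
\frac{d^n}{dz^n}z^{\alpha k+\beta-1}=\frac{\Gamma(\alpha k+\beta)}{\Gamma(\alpha k+\beta-n)}\,z^{\alpha k+\beta-n-1}.
\end{equation*}
Inserting this into the differentiated series, the factor $\Gamma(\alpha k+\beta)$ produced by differentiation cancels exactly against the $\Gamma(\alpha k+\beta)$ sitting in the denominator of the coefficient, and is replaced by $\Gamma(\alpha k+\beta-n)$. Factoring out $z^{\beta-n-1}$ and reassembling $(\mu z^\alpha)^k$, the surviving sum has precisely the shape of an extended Mittag-Leffler series whose lower index has been lowered from $\beta$ to $\beta-n$.

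The part I expect to be the main obstacle is the parameter bookkeeping in the final identification. The cancellation above touches only the $\beta$-index: the beta factor $B_p^{\lambda,\rho}(\gamma+k,c-\gamma)$, the normalisation $B(\gamma,c-\gamma)$, and the Pochhammer symbol $(c)_k$ are all left intact by differentiation in $z$, so the bare outcome of the term-by-term computation is a series carrying the \emph{same} superscripts $\gamma,c,\lambda,\rho$. To reach the stated right-hand side, with its shifted superscripts $\gamma+n,\,c+n,\,\lambda+n,\,\rho+n$, one must reconcile this outcome with the shift relations underlying the preceding derivative formula (\ref{der})—i.e.\ the identities that would rewrite $B_p^{\lambda,\rho}(\gamma+k,c-\gamma)/B(\gamma,c-\gamma)$ together with $(c)_k$ in terms of $B_p^{\lambda+n,\rho+n}(\gamma+n+k,c-\gamma)/B(\gamma+n,c-\gamma)$ and $(c+n)_k$. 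Establishing (or, if they fail, correcting the superscripts to their unshifted values) is the delicate step on which the proof genuinely turns; the differentiation itself is routine and contributes nothing beyond the $\beta\mapsto\beta-n$ shift and the factor $z^{\beta-n-1}$.
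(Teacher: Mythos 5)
Your term-by-term differentiation is the right way to attack this statement, and it is genuinely different from the paper's argument: the paper simply says to take formula (\ref{der}), replace $z$ by $\mu z^{\alpha}$, multiply by $z^{\beta-1}$ and differentiate $n$ times, which is not a valid derivation --- (\ref{der}) computes $d^{n}/dz^{n}$ of $E_{\alpha,\beta}^{\gamma,c;\lambda,\rho}(z;p)$ itself, and the chain rule prevents any direct transfer of that identity to the composite function $z^{\beta-1}E_{\alpha,\beta}^{\gamma,c;\lambda,\rho}(\mu z^{\alpha};p)$. Your series expansion, the power rule $\frac{d^{n}}{dz^{n}}z^{\alpha k+\beta-1}=\frac{\Gamma(\alpha k+\beta)}{\Gamma(\alpha k+\beta-n)}z^{\alpha k+\beta-n-1}$, and the resulting cancellation are all correct, and the justification of termwise differentiation via local uniform convergence is sound.

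The issue you flag at the end is real, and you should resolve it in the negative: the shift identities needed to convert your outcome into the printed right-hand side do not hold. Already the $k=0$, $n=1$ term would require
\begin{equation*}
\frac{B_{p}^{\lambda,\rho}(\gamma,c-\gamma)}{B(\gamma,c-\gamma)}=\frac{B_{p}^{\lambda+1,\rho+1}(\gamma+1,c-\gamma)}{B(\gamma+1,c-\gamma)},
\end{equation*}
which fails for generic $p,\lambda,\rho$ (expand both sides to first order in $p$: the left side carries the factor $\lambda/\rho$ and the right side $(\lambda+1)/(\rho+1)$, multiplying different beta-function ratios). Since differentiation in $z$ leaves $B_{p}^{\lambda,\rho}(\gamma+k,c-\gamma)$, $B(\gamma,c-\gamma)$ and $(c)_{k}$ untouched, what your computation actually establishes is
\begin{equation*}
\frac{d^{n}}{dz^{n}}\Big\{z^{\beta-1}E_{\alpha,\beta}^{\gamma,c;\lambda,\rho}(\mu z^{\alpha};p)\Big\}=z^{\beta-n-1}E_{\alpha,\beta-n}^{\gamma,c;\lambda,\rho}(\mu z^{\alpha};p),
\end{equation*}
with \emph{unshifted} superscripts, consistent with the classical Prabhakar formula. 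The theorem as printed (with $\gamma+n$, $c+n$, $\lambda+n$, $\rho+n$) is therefore false; the shifts appear to have been carried over mechanically from the preceding theorem. Your proof is a correct proof of the corrected statement, not of the one in the paper.
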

\begin{proof}
In equation (\ref{der}), replace $z$ by $\mu z^\alpha$ and multiply by $z^{\beta-1}$ and then taking $nth$ derivative with respect to $z$, we get the required result. 
\end{proof}

%%%%%%%%%%%%%%%%%%%%%%%%%%%%%%%%%%%%%%%%%%%%%%%%%%%%%%
\section{conclusion}
In this paper, we established further extension of extended Mittag-Leffler recently introduced by \cite{OY}. We conclude that if $\lambda=\rho$, then we get the results of exytended Mittag-Leffler function.\\ \\

%\noindent{\bf Conflict of Interests:}
%\noindent We  declare  that there is no conflict of interests regarding the publication of this manuscript.\vskip 2mm
%\noindent{\bf Authors’ contributions:}\vskip 2mm
%
%\noindent All authors contributed equally to the written of this manuscript. All authors read and approved the final manuscript.\\
%\textbf{Acknowledgements:}\\
%The authors would like to express profound gratitude to referees for his/her deeper
%review of this paper and the referee's useful suggestions that led to an improved presentation of the paper.\\

\end{document}